\newcommand{\vl}{\vec \lambda}
\newcommand{\Hess}{\mbox{Hess}}
\newcommand{\wt}{\widehat}
\newcommand{\J}{J_{d_N}}
\newcommand{\N}{d_N}
\newcommand{\E}{{\mathbb E}}
\newcommand{\Q}{{\mathbb Q}}
\newcommand{\U}{{\rm U}}
\renewcommand{\phi}{\varphi}
\newcommand{\go}{\mathfrak}
\newcommand{\bcal}{\mathcal{B}}
\newcommand{\ccal}{\mathcal{C}}
\newcommand{\ecal}{\mathcal{E}}
\newcommand{\fcal}{\mathcal{F}}
\newcommand{\hcal}{\mathcal{H}}
\newcommand{\lcal}{\mathcal{L}^{DH}_{\vec \lambda}}
\newcommand{\pcal}{\mathcal{P}}
\newcommand{\mcal}{\mathcal{M}}
\newcommand{\ncal}{\mathcal{N}}
\newcommand{\qcal}{\mathcal{Q}}
\newcommand{\ocal}{\mathcal{O}}
\newcommand{\scal}{\mathcal{S}}
\numberwithin{equation}{section}
\newtheorem{maintheo}{{\sc Theorem}}
\newtheorem{mainprop}{{\sc Proposition}}
\newtheorem{mainlem}{{\sc Lemma}}
\newtheorem{maincor}{{\sc Corollary}}
\newcommand{\R}{{\mathbb R}}
\newcommand{\Z}{{\mathbb Z}}
\newcommand{\C}{{\mathbb C}}
\newcommand{\half}{{\frac{1}{2}}}
\newtheorem{theo}{{\sc Theorem}}[section]
\newtheorem{lem}[theo]{{\sc Lemma}}
\newtheorem{prop}[theo]{{\sc Proposition}}
\newenvironment{rem}{\medskip\noindent{\it Remark:\/} }{\medskip}
\newenvironment{defin}{\medskip\noindent{\it Definition:\/} }{\medskip}
\title[Random orthonormal bases of spaces of high dimension] {Random orthonormal bases of spaces of high dimension }
\author{Steve Zelditch}
\address{Department of Mathematics, Northwestern  University,
Evanston, IL 60208-2370, USA} \email{
zelditch@math.northwestern.edu}
\thanks{Research partially supported by NSF grants  \# DMS-0904252   and DMS-1206527.}
\begin{document}

\maketitle

\begin{abstract}   We consider a sequence $\hcal_N$ of finite dimensional Hilbert spaces of dimensions $d_N \to \infty$. 
Motivating examples are eigenspaces, or spaces of quasi-modes,  for a Laplace or Schr\"odinger operator on a compact
Riemannian manifold. The set of Hermitian orthonormal bases of $\hcal_N$ may be identified with $U(d_N)$, and 
a random orthonormal basis of $\bigoplus_N \hcal_N$ is a choice of a random  sequence $U_N \in U(d_N)$ from
the product of normalized Haar measures. We prove that if $d_N \to \infty$ and if $\frac{1}{d_N} Tr A |_{\hcal_N}$
tends to a unique limit state $\omega(A)$, then almost surely an orthonormal basis is quantum ergodic with
limit state $\omega(A)$. This generalizes an earlier result of the author in the case where $\hcal_N$ is the space
of spherical harmonics on $S^2$. In particular, it holds on the flat torus $\R^d/\Z^d$ if $d \geq 5$ and shows
that a highly localized orthonormal basis can be synthesized from  quantum ergodic ones  and vice-versa in relatively
small dimensions. 
\end{abstract}

       The purpose of this article is to  prove a general result on the quantum ergodicity   of random orthonormal
bases  
 $\{\psi_{N, j}\}_{j = 1}^{d_N}$ of  finite dimensional Hilbert spaces $\hcal_N \subset L^2(M)$ of dimensions $d_N \to \infty$ of a compact
Riemannian manifold $(M, g)$.
 The proof is  based on a ``moment polytope" interpretation of quantum ergodicity from \cite{Z1}: 
the  quantum variances of a Hermitian  observable $A \in \Psi^0(M)$ are identified  with   moments of inertia  of the
convex  polytopes $
\pcal_{\vec \lambda}$ defined as the convex hull of the vectors   $\vec \lambda = (\lambda_{ 1}, \dots, \lambda_{ d_N})$ of eigenvalues (in all possible orders)  of 
 $\Pi_N
A \Pi_N$ where $\Pi_N: L^2(M) \to \hcal_N$ is the orthogonal projection. Equivalently, 
$\pcal_{\vec \lambda}$ is  the image of the coadjoint orbit $\ocal_{\vec \lambda}$ of the diagonal matrix
$D(\vec \lambda)$  under the moment map
for the   Hamiltonian   action of the maximal torus $T_{d_N} \subset U(d_N)$ of diagonal matrices acting by conjugation   on 
$\ocal_{\vec \lambda}$. In particular, the  main estimates of quantum 
ergodicity can be formulated in terms of estimates of the  first four moments of inertia of  $\pcal_{\vec \lambda}$. The main result, Theorem \ref{MAINTHEO},  states that random
orthonormal bases are almost surely quantum ergodic as long as $d_N \to \infty$ and  $\frac{1}{d_N} Tr \Pi_N A \Pi_N \to \omega(A)$
for all $A \in \Psi^0(M)$, where $\omega(A)$ is the Liouville state.  More generally, if these traces have any unique limit
state, then almost surely it is the quantum  limit of a random orthonormal basis.  The proof is essentially implicit in
\cite{Z1}, but we bring it out explicitly here and also give detailed calculations of the moments of intertia, which
seem of independent interest.

Quantum ergodicity of random orthonormal bases is a rigorous result on  the `random wave model' in quantum chaos, according
to which eigenfunctions of quantum chaotic systems should behave like  random waves. It also has implications for
the approximation of modes by quasi-modes. Since eigenfunctions of the Laplacian $\Delta$ of a compact
Riemannian manifold $(M, g)$ form an orthonormal basis, it is natural to compare the orthonormal basis of eigenfunctions to a `random orthonormal basis'. 
In  \cite{Z1}, the result of this article was proved for the special case where  $\hcal_N$  is the space
of degree N spherical harmonics on the standard $S^2$. In \cite{Z2} the
quantum ergodic property was generalized to any compact Riemannian manifold, with $\hcal_N$ the span
of the eigenfunctions in a spectral interval $[N,  N + 1]$ for $\sqrt{\Delta}$.  Related results have recently been proved in \cite{SZ,BL}. The dimension of such  $\hcal_N$ grows at the rate $N^{m-1}$ where $m = \dim M$ and thus a random 
element of $\hcal_N$ is a superposition of $N^{m-1}$ states.
The  results of this article  show that the same quantum ergodicity property holds for sequences of eigenspaces (or linear combinations)
whose dimensions $d_N $ tend to infinity  at any rate. 
For instance, 
the results show that random orthonormal bases of eigenfunctions  on a flat torus of dimension $\geq 5$  are quantum ergodic (for the
precise statement, see \S \ref{TORUS}, and for further discussion, see \S \ref{DISC}.)

 To explain the moment map interpretation and the variance formula, recall that quantum ergodicity is concerned
with quantum variances, i.e. with  the dispersion from the mean of the diagonal part of a Hermitian matrix $H_N$ on a large dimensional
vector space $\hcal_N$.  The matrix $H_N$ is the restriction \begin{equation} \label{TAN}  T_N^A : = \Pi_N A \Pi_N \end{equation}  to $\hcal_N$   
of a pseudo-differential operator
$A \in \Psi^0(M)$; here $\Pi_N$ is the orthogonal projection to $\hcal_N$ and $\Psi^0(M)$ is the space of pseudo-differential operators
of order zero.  The same methods and results apply to other context such as semi-classical pseudo-differential operators or to Toeplitz
operators on holomorphic sections of powers of a positive line bundle \cite{SZ}. 
Given an ONB $\{\psi_{N, j}\}_{j = 1}^{d_N}$ of $\hcal_N$ we define
the quantum variances of the ONB  (indexed by $A \in \Psi^0(M)$ by
\begin{equation} \label{VARDEF} V_A(\{\psi_{N k}\}) : = \frac{1}{d_N} \sum_{j = 1}^{d_N} |\langle A \psi_{N, j},  \psi_{N, j} \rangle - \omega(A)|^2.  \end{equation}
Here, $\omega(A) = \int_{S^* M} \sigma_A d\mu_L$ where $d\mu_L$ is normalized Liouville measure (of
mass one).  

\begin{defin} A sequence $\{\psi_{N j}\}_N$ of  ONB's of $\hcal_N$ is   a quantum ergodic ONB of $L^2(M)$
if 
\begin{equation} \label{EP}  (\ecal \pcal)\;\;\;\;\;\;\; \lim_{N \to \infty} V_A(\{\psi_{N k}\}) = 0,\;\;\; \forall A \in \Psi^0(M).  \end{equation}
\end{defin}
By a standard diagonal argument, this implies that almost all the individual elements $\langle A \psi_{N, j}, \psi_{N, j} \rangle $
tend to $\omega(A)$. Since this aspect of quantum ergodicity is the same as in \cite{Z1,SZ} (e.g.) we do not discuss it here.

 To define  random orthonormal bases, we introduce the
probability space $(\ocal \ncal \bcal ,d\nu)$, where $\ocal \ncal \bcal$ is the infinite
product of the sets $\ocal \ncal \bcal_N$ of orthonormal bases of the spaces
$\hcal_N$, and $\nu =\prod_{N=1}^{\infty} \nu_N$, where $\nu_N$ is
Haar probability measure on $\ocal \ncal \bcal_N$. A point of $\ocal \ncal \bcal$ is thus
a sequence ${\bf \Psi} = \{(\psi^N_1, \dots, \psi^N_{d_N}) \}_{N\geq 1}$ of
orthonormal basis. 
Given one orthonormal basis $\{e_j^N\}$  of $\hcal_N$ any other is  related to it by a unique unitary
matrix. So the probability space is equivalent to the product
\begin{equation} \label{ONBSPACE} (\ocal \ncal \bcal ,d\nu) \simeq \prod_{N = 1}^{\infty} (U(d_N), dU) \end{equation}
where $dU $ is the unit mass Haar measure on $U(d_N)$.   Here we are working with Hermitian orthonormal
bases and Hermitian pseudo-differential operators. We could also work with real self-adjoint operators and real
orthornormal bases, which are then related by the orthogonal group. The results in that setting are essentially the same
but the proofs are somewhat more complicated; for expository simplicity we stick to the unitary Hermitian framework. 

Let $A \in \Psi^0$ and denote the eigenvalues of $T^A_N$
by $\lambda_1,\dots,\lambda_{d_N}$.  The  empirical measure of eigenvalues of $T^A_N$ is defined by
 \begin{equation} \label{EMP} \nu_{\vec \lambda_N} := \frac{1}{\N}  \sum_{j = 1}^{\N} \delta_{\lambda_j} . \end{equation}
Its moments are given by  
\begin{equation} \label{POWER} p_k(\lambda_1,\dots,\lambda_{d_N})=\sum_{j=1}^{d_N} \lambda_j^k = {\rm Tr}  (T_N^A)^k. 
\end{equation}
To obtain quantum ergodicity, we  we put the following constraint on the sequence $\{\hcal_N\}$:

\begin{defin}We say that $\hcal_N$ has local Weyl  asymptotics if, for all $A \in \Psi^0(M)$,
\begin{equation}  \label{TRACE}  \frac{1}{d_N} \rm{Tr} T_N^A = \omega(A) + o(1). \end{equation}

\end{defin}

In fact, the results generalize to the case where $\omega(A)$ is replaced by any other limit state, i.e. $\int_{S^*M} \sigma_A d\mu$
where $d\mu$ is another invariant probability measure for the geodesic flow.

Our main result is:

\begin{maintheo} \label{MAINTHEO} Let $\hcal_N$ be a sequence of subspaces of $L^2(M)$ of dimensions $d_N = \dim \hcal_N
\to \infty$. Assume that $\frac{1}{d_N} Tr \Pi_N A \Pi_N = \omega(A) + o(1) $ for all $A \in \Psi^0(M)$. 
Then with probability one in  $(\ocal \ncal \bcal ,d\nu) $, a random orthonormal basis of $\bigoplus_N \hcal_N$ is quantum ergodic. \end{maintheo}


A natural question (which we do not study here) is whether a random orthonormal basis is QUE, i.e. whether
$$\max \{|\langle A \psi_{N, j},  \psi_{N, j} \rangle - \omega(A)|^2, \;\;\; j = 1, \dots, d_N\} \to 0\;\; (a.s.) d\nu \;\;?. $$
As a tail event the probability of a random orthonormal basis being QUE is either 0 or 1. 

We now explain how to formula Theorem \ref{MAINTHEO}  in terms of moment maps and polytopes.
Quantum 
ergodicity of  a random orthonormal bases concerns the dispersion from the mean of the diagonal part of $T_N^A$. 
The diagonal part depends on the choice of an orthonormal basis  of $\hcal_N$. Once an orthonormal basis is fixed,
$iT^A_N$ can be identified with an element $H_N$ of the Lie
algebra ${\go u}(d_N)$ of
$\U(d_N)$, and a unitary
change of the orthonormal basis results in the conjugation $H_N \to U_N^* H_N U_N$ of $H_N$. If the vector
of eigenvalues of $H_N$ is denoted $\vec \lambda_N$, then the conjugates sweep out the orbit $\ocal_{\vec \lambda_N}$.
 Let ${\go t}(d_N)$ denote the Cartan subalgebra of diagonal
elements in ${\go u}(d_N)$, and let $\|\cdot\|^2$ denote
the Euclidean inner product on ${\go t}(d_N)$.  Also let
$$J_{d_N}:i{\go u}(d_N)\rightarrow i{\go t}(d_N)$$
denote the orthogonal projection (extracting the diagonal). Extracting the diagonal from each element of the orbit is precisely the moment map
\begin{equation} \label{J} \J: \ocal_{\vec \lambda_N} \to \pcal_{\vec \lambda_N}  \subset i {\go t}(d_N),\;\;\;
\J (U D(\vec \lambda) U^*) = \left(\dots, \sum_{j = 1}^{d_N} \lambda_j |U_{i j}|^2, \dots \right) \end{equation}  of the conjugation action of $T_{\N}$.
Finally, let $$\bar J_{d_N} (H)=\left(\frac{1}{d_N}{\rm Tr}\;H\right){\rm
Id}_{d_N}, \;\;\;\; D_0(\vec \lambda_N) = D(\vec \lambda_N) - \left(\frac{1}{d_N}{\rm Tr}\;H\right){\rm
Id}_{d_N}\;,$$ for Hermitian matrices $H\in i{\go u}(d_N)$. We also introduce  notation for the diagonal
of $D_0(\vec \lambda)$:
\begin{equation} \label{CAPL}  D_0(\vec \lambda) = D (\vec \Lambda), \;\; \mbox{with};\;
\Lambda_j : = \lambda_j -\frac{1}{d_N} \sum_{j = 1}^{d_N} \lambda_j. \end{equation}
Thus, $$H=H^0 +\bar J_d(H), \;\;\; \mbox{resp.}\;\;\;D(\vec \lambda_N) = D_0(\vec \lambda_N) + \left(\frac{1}{d_N}{\rm Tr}\;H\right){\rm
Id}_{d_N}$$   with $H^0$ traceless, corresponds to the
decomposition  ${\go u} (d_N) =  {\go su} (d_N)\oplus \R$.

As this description indicates,  quantum ergodicity of random orthonormal bases is mainly a result about the
asymptotic geometry of the polytopes $\pcal_{\vec \lambda_N}$ corresponding to a sequence $T_N^A$ of Toeplitz operators.
The pushforward of the $U(d_N)$-invariant normalized measure on $\ocal_{\vec \lambda}$ to $\pcal_{\vec \lambda}$ is
the so-called Duistermaat-Heckman measure $d\lcal$, a piecewise polynomial measure on $\pcal_{\vec
\lambda}$. 
To prove almost sure quantum ergodicity, we  prove that
  for all such sequences $T_N^A$ and their spectra $\{\vec \lambda_N\}$, the second and fourth moments of inertia
of $\pcal_{\vec \lambda}$ with respect to $d \lcal$ are bounded. We use the property in Definition \ref{TRACE}
to replace $\omega(A)$ by the centers of mass, i.e. the scalar matrix with the same trace as $T_N^A$. The Kolmogorov
strong law of large numbers then gives the quantum ergodicity property. 
 In \cite{Z3}, we study higher moments and their implication for the limit shape of $\pcal_{\vec \lambda}$ along
a sequence $\{\lambda_N\}$ with a limit empirical measure.


We asymptotically evalute the moments using the  Fourier transform
\begin{equation} \label{FT} \hat{\mu}_{\vec \lambda}(X) := \int_{\ocal(\vec \lambda)} e^{i \langle X, \mbox{diag}(Y)\rangle} d \mu_{\vec \lambda}(Y) \end{equation} 
of the $\delta$-function on $\ocal_{\vec \lambda}$.  Here,  we assume $X \in \R^{d_N}$.  We may identify $X$  with
a diagonal matrix,  and then  $\langle X, \mbox{diag}(Y)\rangle = Tr X Y$, and we get the standard Fourier transform. We obviously have:

\begin{mainlem} \label{VARFORM} Let $\Delta$ be the Euclidean Laplacian of $\R^{d_N}$ acting in the $X$ variable. Then,

$$ \left\{ \begin{array}{l}   m_{2}(\pcal_{\vec \lambda_N}) : = \E|| \J (U^* D(\vec \lambda) U) ||^2  = - \Delta \hat{\mu}_{\vec \lambda}(X) |_{X = 0}, \\ \\  m_{4}(\pcal_{\vec \lambda_N})  : =
\E||\J (U^* D(\vl) U) ||^4 =  \Delta^2 \hat{\mu}_{\vec \lambda}(X) |_{X = 0},  \end{array} \right.$$

\end{mainlem}

 We translate $\vec \lambda$ by its center of mass to make the center of
mass of $\pcal_{\vec \lambda}$ equal to $0$, i.e. $\sum \lambda_j = 0$. 
 Using a formula for $ \hat{\mu}_{\lambda}(X)$  in terms of Schur polynomials,  we  prove

\begin{mainlem} \label{4}  Let $p_k$ be the  power functions \eqref{POWER}. Assume that $p_1(\vec \lambda) = 0$.
Then,
$$  \left\{ \begin{array}{l} \Delta \hat{\mu}_{\vec \lambda}(0) = \frac{p_2(\vec\lambda)}{d_N+1},  \\ \\
\Delta^2 \hat{\mu}_{\vec \lambda}(0) 
= \beta_4(d_N)\;\; p_2^2 (\vec \lambda), \\ \\
\mbox{with}\;\;\; \beta_4(d_N) =  \left(\frac{ 4 d_N(d_N -1) }{(\N + 1) \N^2 (\N-1) }
-   \frac{ 4 d_N(d_N -1) }{(\N + 2)(\N+1) \N  (\N - 2)} 
 +    \frac{ (12 \N^2 + 4 d_N (d_N -1)  ) }{(\N + 3) (\N+2) (\N+1)\N}
\right) 
. \end{array}  \right. $$
\end{mainlem}

The proof of Theorem \ref{MAINTHEO} follows directly from Lemma \ref{4} and the Kolmogorov SLLN (strong law
of large numbers). When  $d_N$ grows fast enough it also follows directly from the Borel-Cantelli Lemma.
We first introduce notation for the basic random variables:

\begin{defin}\label{YNDEF}
$$\left\{\begin{array}{l}
Y^A_N: \ocal \ncal \bcal_N \to [0,+\infty), \;\;\; \Psi = (U_{d_1}, U_{d_2}, \dots)\\[6pt]
Y^A_N({\bf \Psi}) := \|\J(U_N^* D(\vl) U_N) -
\overline{D(\vl) } \|^2 =  \|\J(U_N^* D_0(\vl) U_N)  \|^2 \end{array} \right.$$
\end{defin}
 Then Lemma \ref{VARFORM}-Lemma \ref{4}   determine the asymptotics of their mean and  variance
$$\mbox{\rm Var}\left(Y^A_N\right) := \E ((Y_N^{A})^2) - (\E ((Y_N^{A}))^2. $$
\begin{maincor} \label{VARCOR} 
$$\begin{array}{lll} \mbox{\rm Var}\left(Y^A_N\right)  & = & \left(\beta_4(d_N)    -   \frac{1}{(d_N+1)^2}
 \right)  p_2^2 ( \vec \Lambda) \simeq \frac{3}{d_N^2} p_2^2 (\vec \Lambda) .\end{array}$$
\end{maincor}

 The Lemma first implies that
$  \E|| \J (U^* D(\vec \lambda) U) ||^2  $ is bounded for all $A \in \Psi^0(M)$. Hence,  $  \E (\frac{1}{d_N} || \J (U^* D(\vec \lambda) U) ||^2)
\to 0 $ as long as $d_N \to \infty$.  Thus, the mean of the quantum variances
\eqref{VARDEF} tends to zero. As in \cite{Z1,SZ} we then apply the 
Kolmogorov SLLN (or the martingale convergence theorem). The $\{Y_N^A\}$
is a sequence of   independent
random variables as $N$ varies and Lemma \ref{4} shows that they have bounded variance. Hence the  SLLN implies
that the partial sums,
\begin{equation} \label{SN} S_N : = \sum_{n \leq N}  \frac{1}{d_n} (Y_n^A - \E Y_n^A) \end{equation}
have the property,
\begin{equation} \frac{1}{N} S_N  \to 0, \;\; \mbox{almost surely} \end{equation}
and this is equivalent to quantum ergodicity of random orthonormal bases. 
As mentioned above,  if $d_N$ grows at a faster rate one can obtain stronger results from the Borel-Cantelli
Lemma:  E.g. if $\sum_{n = 1}^{\infty} \frac{1}{d_n} < \infty$,
one obtains almost sure convergence $\frac{1}{d_n} Y_n^A \to 0$ (a.s.). 

Since the argument above only requires that $\frac{1}{d_n} \E Y_n^A \to 0$ and $\mbox{Var}(Y_n^A)$ is bounded,
it does not require any assumption that $\E Y_n^A $ tends to a limit.  Our calculations therefore go beyond what
is necessary for almost sure quantum ergodicity, and  pertain to the asymptotic geometry of the
polytopes $\pcal_{\vec \lambda}$.   There is a natural  condition
on the this sequence of polytopes:

\begin{defin} \label{LIMIT} We say that the sequence $\{\hcal_N\}$ has Szeg\"o asymptotics if,  for all $A \in \Psi^0(M)$, there exists
a unique  weak* limit, 
$ \nu_{\vec \lambda_N}  \to \nu_A \in \mcal(\R)$     as $N \to \infty$. Here, $\mcal(\R)$ is the set of probability
measures on $\R$.  
\end{defin}

Under this stronger assumption,  Lemma \ref{4} gives moment asymptotics:

\begin{mainprop} \label{1} Let $\vec \lambda_N \in \R^{d_N}$ be a sequence of vectors with the property that the 
empirical measures \eqref{EMP} tend to a weak limit $\nu$. Then $$\left\{ \begin{array}{l} 
 m_{2}(\pcal_{\vec \lambda_N}) \to \int_{\R} (t - \bar{t})^2 d\nu, \\ \\
 m_{4}(\pcal_{\vec \lambda_N}) \to  4  \left(\int_{\R} (t - \bar{t})^2 d\nu \right)^2
 \end{array} \right.$$

\end{mainprop}

This Proposition is closely related to the ``Weingarten theorem" that the matrix elements $\sqrt{d_N} U_{ij} $ are asymptotically
complex normal random variables, where $U_{ij}$ are the matrix elements of $U \in U(d_N)$ \cite{W}. Perhaps
this explains why the fourth moment is a constant multiple of the square of the second moment. It would be interesting
to see if the pattern continues; we plan to study $\pcal_{\vec \lambda}$ further in \cite{Z3}.

\subsection{\label{DISC} Discussion}

The motivation for proving  quantum ergodicity of random orthonormal bases for $\hcal_N$ of any dimensions tending
to infinity 
was prompted by  the general  question: how many diffuse states (modes or quasi-modes) does it take to synthesize localized 
modes or quasi-modes?
Vice-versa, how many localized states does it take to synthesize diffuse states?  We would like to synthesize
entire orthonormal bases rather than individual states and measure the dimensions of the space of states in terms of the Planck constant $\hbar$. Let us consider some examples.

In the case of the standard $S^2$, the eigenspaces $\hcal_N$ of $\Delta$ are the spaces of spherical harmonics of degree $N$.
They have the well-known highly localized basis $Y^N_m$ of joint eigenfunctions of $\Delta$ and of rotations around
the $x_3$-axis. By localized we mean that a sequence $\{Y^N_m\}$ with $m/N \to \alpha$ microlocally concentrates
on the  invariant tori in $S^* S^2$ where $p_{\theta} = \alpha$. Here, $p_{\theta}(x, \xi) = \xi(\frac{\partial}{\partial \theta})$
where $\frac{\partial}{\partial \theta}$ generates the $x_3$-axis rotations. On the other hand, it is proved in \cite{Z1}
that independent  ``random" orthonormal bases of $\hcal_N$ are quantum ergodic, i.e. are highly diffuse in $S^* S^2$. 
Since $\dim \hcal_N = 2N + 1$, it is perhaps not surprising that the same eigenspace can have both highly localized
and highly diffuse orthonormal bases when its dimenson is so large. The question is, how large must it be for
such incoherently related bases to exist?

A setting where the eigenvalues have high multiplicity but of a lower order of magnitude than on $S^2$ is 
that of flat rational tori  $\R^n/L$ such as
$\R^n/\Z^n$. Of course it has an orthonormal basis of localized eigenfunctions, $e^{i \langle k, x \rangle}$. 
 The key feature of such rational tori is the high multiplicity of eigenvalues of the
Laplacian $\Delta$ of the flat metric. It is well-known and easy to see that the multiplicity is the
number of lattice points of the dual lattice $L^*$ lying on the surface of a Euclidean sphere. 
 We  denote
 the distinct multiple $\Delta$-eigenvalues by $\mu_N$, the corresponding eigenspace by $\hcal_N$
 and  the multiplicity of $\mu_N^2$ by $d_N = \dim \hcal_N$.
In dimensions
$n \geq 5$, $d_N \sim \mu_N^{n - 2}$,  one degree lower than  the maximum possible multiplicity of
a $\Delta$-eigenvalue on any compact Riemannian manifold,  achieved on the standard $S^n$. Further, 
$\frac{1}{d_N} Tr \Pi_N A \Pi_N \to \omega(A)$. 
Hence, the  results of this article show that despite the relatively slow growth of $d_N$ on a flat rational
torus, orthonormal bases of $\hcal_N$ in dimensions $\geq 5$ are almost surely  quantum ergodic. The statement for dimensions $2, 3, 4$ is 
more complicated (see \S \ref{TORUS}).

An interesting setting where the behavior of eigenfunctions is largely unknown is that of KAM systems. For these,
one may construct a  `nearly' complete and  orthonormal basis for $L^2(M)$ by highly localized quasi-modes
associated to the Cantor set of invariant tori. It seems unlikely that the actual eigenfunctions are  quantum ergodic;  
but the results of this article show that if they resemble random combinations of the quasi-mode, then it is
possible that they are.    Further discussion is in \S \ref{QM}.

\section{Background}

In this section, we review the definition of random orthonormal basis and relate it to properties of the moment
map for the diagonal action of the maximal torus $T_{\N}$ on co-adjoint orbits of $U(\N)$.

\subsection{Random orthonormal bases of eigenspaces} Suppose that we have a sequence of Hilbert spaces $\hcal_N$
$N = 1, 2, \dots$ of dimensions $d_N = \dim \hcal_N \to \infty$. We define  the large Hilbert space
$$ \hcal = \bigoplus_{N = 1}^{\infty} \hcal_N $$
and orthogonal projections
\begin{equation} \label{PIN} \Pi_N : \hcal \to \hcal_N . \end{equation} 
We then consider the orthonormal bases \eqref{ONBSPACE} of $\hcal$ which arise from sequences 
of orthonormal bases of $\hcal_N$.



\subsection{The basic random variables}

Let $A \in \Psi^0(M)$ be a zeroth order pseudo-differential operator. By a Toeplitz operator we mean
the compression $T_N^A$  \eqref{TAN}  of $A$ to $\hcal_N$.

Given one ONB of $\hcal_N$,  $T^A_N$   can be identified
with a Hermitian $d_N\times d_N$ matrix. 
We fix orthonormal bases $\{e^N_j\}_{j = 1}^{d_N}$ of $\hcal_N$ and  introduce the random variables:
  \begin{equation} A_{Nj}({\bf \Psi}) = \left| \langle A  \psi^N_j,
\psi^N_j \rangle- \omega(A) \right|^2 = \left|(T^A_N \psi^N_j, \psi^N_j)- \omega(A)
\right|^2 = \left|(U^*_N T^A_N U_N e^N_j, e^N_j)- \omega(A)
\right|^2,\end{equation}
where ${\bf \Psi} = \{U_N\},\ U_N\in \U(d_N)\equiv \ocal \bcal \ncal_N$.
We also define
  \begin{equation}\label{Anj}\wt
A_{Nj}({\bf \Psi})=\left|(U^*_N T^A_N U_N e^N_j,
e^N_j)-\frac{1}{d_N}{\rm Tr}\;T^A_N \right|^2 \;.\end{equation}


Evidently,

\begin{equation}\label{Y}\frac{1}{d_N}
Y^A_N({\bf \Psi}) =\frac{1}{d_N}\sum_{j=1}^{d_N}\wt A_{Nj}({\bf \Psi})
=\frac{1}{d_N}\sum_{j=1}^{d_N}A_{Nj}({\bf \Psi})
+ o(1)\end{equation} (where the $o(1)$ term is independent
of $ {\bf \Psi}).$  Thus,

\begin{lem}  \cite{Z1,SZ} The ergodic property of an ONB ${\bf \Psi}$  $(\ecal \pcal)$ is equivalent to:
\begin{equation}\label{EP*}
\lim_{N\rightarrow\infty}\frac{1}{N}\sum_{n=1}^N
\frac{1}{d_n} Y^A_n ({\bf \Psi}) = 0\;,\quad \forall A \in
\Psi^0 (M)\;. \end{equation} 

\end{lem}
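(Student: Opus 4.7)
The statement reduces to the relation \eqref{Y}, namely
$$\frac{1}{d_N} Y^A_N({\bf\Psi}) = \frac{1}{d_N}\sum_{j=1}^{d_N} A_{Nj}({\bf\Psi}) + o(1)$$
with $o(1)$ uniform in ${\bf\Psi}$. Since $V_A(\{\psi_{Nk}\}) = \frac{1}{d_N}\sum_j A_{Nj}({\bf\Psi})$ by definition \eqref{VARDEF}, the two conditions appearing in the lemma are the Ces\`aro averages of the same sequence up to a null perturbation, and the equivalence follows.

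To verify \eqref{Y}, I would first unpack Definition \ref{YNDEF}. The diagonal entries of $U_N^* T_N^A U_N$ in the basis $\{e^N_j\}$ are $\alpha_j := (U_N^* T_N^A U_N e_j^N, e_j^N) = \sum_k \lambda_k |U_{jk}|^2$, which is exactly the $j$-th component of $\J(U_N^* D(\vl)U_N)$. Subtracting $\bar J_{d_N}(D(\vl))=\tfrac{1}{d_N}\mathrm{Tr}\, T_N^A \cdot \Id$ and taking the squared Euclidean norm yields $Y^A_N({\bf\Psi}) = \sum_{j} \wt A_{Nj}({\bf\Psi})$ directly from \eqref{Anj}.

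To pass from $\wt A_{Nj}$ to $A_{Nj}$, I would write $\omega(A) = \tfrac{1}{d_N}\mathrm{Tr}\,T_N^A + \epsilon_N$ with $\epsilon_N = o(1)$ by the local Weyl hypothesis \eqref{TRACE}, and expand
$$A_{Nj} - \wt A_{Nj} = -2\epsilon_N\bigl(\alpha_j - \tfrac{1}{d_N}\mathrm{Tr}\,T_N^A\bigr) + \epsilon_N^2.$$
Summing over $j$, the linear term vanishes because $\sum_j \alpha_j = \mathrm{Tr}(U_N^* T_N^A U_N) = \mathrm{Tr}\,T_N^A$, so $\frac{1}{d_N}\sum_j\bigl(A_{Nj} - \wt A_{Nj}\bigr) = \epsilon_N^2 = o(1)$, uniformly in $U_N$. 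This establishes \eqref{Y}.

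With \eqref{Y} in hand, the forward implication $(\ecal\pcal)\Rightarrow$ Ces\`aro is trivial: if $V_A(n)\to 0$ then $\tfrac{1}{d_n}Y^A_n \to 0$ and Ces\`aro averaging preserves convergence. For the reverse direction, the Ces\`aro vanishing of $\tfrac{1}{d_n}Y^A_n$ transfers via \eqref{Y} to $V_A(n)$; since $V_A(n)\geq 0$, this forces $V_A(n)\to 0$ along a density-one subsequence, and a standard diagonal extraction over a countable norm-dense subset of $\Psi^0(M)$ yields a single density-one subsequence on which $V_A(n)\to 0$ for every $A$, which is $(\ecal\pcal)$ in the sense used in \cite{Z1,SZ}. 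The only genuine point is the uniformity of $o(1)$ in \eqref{Y}, which rests entirely on \eqref{TRACE}; everything else is elementary bookkeeping.
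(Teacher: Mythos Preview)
Your argument is correct and follows the paper's approach exactly: the paper deduces the lemma directly from \eqref{Y}, which it states with the word ``Evidently,'' while you supply the verification the paper omits (in particular that the linear cross term cancels because $\sum_j\alpha_j=\mathrm{Tr}\,T_N^A$, leaving only $\epsilon_N^2$). Your treatment of the converse direction---density-one subsequence plus diagonal extraction over a countable dense set---is in fact more careful than the paper, which leaves the word ``equivalent'' unexamined and simply defers to \cite{Z1,SZ}.
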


As mentioned in the introduction, it follows by a 
standard diagonal argument  that almost all the individual elements $\langle A \psi_{N, j}, \psi_{N, j} \rangle $
tend to $\omega(A)$ for all $A$. We do not discuss this step since it is nothing new.




\subsection{Moment map interpretation}

In the case where the components of $\vec \lambda_N$ are distinct,
the covex polytope $\pcal_{\vec \lambda_N}$  is the permutahedron determined
by $\lambda$, that is, the simple convex polytope defined as the convex hull of the 
points $\{\sigma(\vec \lambda_N)\}$ where $\sigma \in S_N$ runs over the symmetric
group on $d_N$ letters (i.e. the Weyl group of $U(d_N)$). The center of mass is the unique
point $X \in \pcal_{\vec \lambda_N}$ so that 
$$\sum_{\sigma \in S_{d_N}} \overline{X \sigma(\vec \lambda_N)} = 0 \iff X = \frac{1}{(d_N)!} \sum_{\sigma \in S_{d_N}}\sigma (\vec \lambda_N)$$
where $\overline{XY} = X - Y$ is the vector from $X$ to $Y$.  The center of mass is evidently invariant under
$S_{d_N}$, hence has the form $(a, a, \dots, a)$ for some $a$ and clearly $a = \frac{1}{d_N} \sum_{j = 1}^{d_N} \lambda_j.$

In 
effect, we want to asymptotically calculate the moments of inertia of the sequence of
permutahedra associated to a Toeplitz operator.  
\vspace{1cm}

\begin{center}
\includegraphics[scale=0.5]{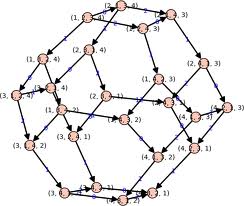} \text{Permutahedron}
\end{center}
\vspace{1cm}


\subsection{Symmetric polynomials and Schur polynomials}

The elementary symmetric polynomials of $N$ variables are defined by
$$e_k(X_1, \dots, X_N) = \sum_{i_1 < i_2 < \cdots < i_k \leq N} x_{i_1} \cdots x_{i_N}. $$
If one replaces $<$ by $\leq $ one obtains the complete symmetric polynomials $h_k$. The 
 Schur polynomials are symmetric polynomials defined by 
$$S_{\lambda} = \det \begin{pmatrix} h_{\lambda_i + j - i} \end{pmatrix} = \det \begin{pmatrix} e_{\mu_i + j - i} \end{pmatrix} 
$$
where $\mu$ is a dual partition to $\lambda$.



\subsection{Fourier transform of the orbit}

We can compute the moments using the Fourier transform \eqref{FT} of the orbital measure on
the orbit of $D(\vec \lambda)$.

An  explicit formuae for $ \hat{\mu}_{\lambda}(X) $ is given in the first line of the proof of  Theorem 5.1 of \cite{OV}:

\begin{lem} \label{OV} For $U(d)$, 
\begin{equation} 
\hat{\mu}_{\vec \lambda}(X) = (d-1)! \cdots 0! \sum_{\mu: \ell(\mu) \leq d}
\frac{S_{\mu}(X) S_{\mu} (i \vec \lambda)}{(\mu_1 + d-1)! (\mu_2 + d-2)! \cdots \mu_N!}. \end{equation}
Here, $\ell(\mu)$ is the number of rows of the partition $\mu$.  The degree of $s_{\mu}$ is $|\mu|$,
the number of boxes. 
\end{lem}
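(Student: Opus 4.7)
The plan is to reduce the orbital Fourier transform to the Harish-Chandra--Itzykson--Zuber (HCIZ) integral, and then expand the resulting determinant of exponentials into Schur polynomials via the Weyl bialternant formula. First, I identify $X=(x_1,\dots,x_d)\in\R^d$ with the diagonal matrix $D(X)$, so that $\langle X,\mathrm{diag}(Y)\rangle=\mathrm{Tr}(D(X)Y)$ and
\[
\hat\mu_{\vec\lambda}(X)=\int_{U(d)}e^{\,i\,\mathrm{Tr}(D(X)\,U^*D(\vec\lambda)\,U)}\,dU.
\]
On the open dense subset where $X$ and $\vec\lambda$ both have pairwise distinct entries, HCIZ evaluates this as
\[
\hat\mu_{\vec\lambda}(X)=\Bigl(\prod_{k=0}^{d-1}k!\Bigr)\,\frac{\det(e^{i x_a\lambda_b})_{a,b=1}^d}{i^{d(d-1)/2}\,\Delta(X)\,\Delta(\vec\lambda)},
\]
where $\Delta$ denotes the Vandermonde.

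Next, setting $y_b=i\lambda_b$ and expanding $e^{x_a y_b}=\sum_k (x_a y_b)^k/k!$, I would compute $\det(e^{x_a y_b})_{a,b}$ by distributing the Taylor series through the determinant and using multilinearity. Antisymmetry kills all contributions except those with pairwise distinct exponents, which group naturally by their decreasing reordering $m_j=\mu_j+d-j$ for a partition $\mu$ with $\ell(\mu)\le d$. A direct rearrangement gives
\[
\det(e^{x_a y_b})=\sum_{\mu:\ell(\mu)\le d}\frac{\det(x_a^{\mu_j+d-j})_{a,j}\,\det(y_b^{\mu_j+d-j})_{b,j}}{\prod_{j=1}^{d}(\mu_j+d-j)!}.
\]
The Weyl bialternant formula $S_\mu(x)=\det(x_a^{\mu_j+d-j})/\Delta(x)$ (and the analogous identity in $y$) replaces each of these determinants by a Schur polynomial times a Vandermonde factor, and $\Delta(i\vec\lambda)=i^{d(d-1)/2}\,\Delta(\vec\lambda)$ supplies the missing power of $i$.

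Combining, the two Vandermondes and the factor $i^{d(d-1)/2}$ produced by the Schur expansion exactly cancel the corresponding terms in the denominator of the HCIZ expression, leaving
\[
\hat\mu_{\vec\lambda}(X)=\Bigl(\prod_{k=0}^{d-1}k!\Bigr)\sum_{\mu:\ell(\mu)\le d}\frac{S_\mu(X)\,S_\mu(i\vec\lambda)}{\prod_{j=1}^{d}(\mu_j+d-j)!},
\]
as claimed.

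The main obstacle I expect is bookkeeping the sign and the factor $i^{d(d-1)/2}$ through the Vandermonde/Schur rearrangement, to check that the two copies of $i^{d(d-1)/2}$ really do cancel and that the leading constant collapses cleanly to $\prod_{k=0}^{d-1}k!$. A secondary point is that HCIZ is only stated on the distinct-spectrum locus; extending to coincident $X$ or $\vec\lambda$ requires a short continuity argument, which is available because both sides are entire in $(X,\vec\lambda)$ and the Schur series converges absolutely on compacta thanks to the factorial denominators. Finally, one should confirm that the normalization of $d\mu_{\vec\lambda}$ (pushforward under conjugation of the unit-mass Haar measure on $U(d)$) matches the convention implicit in HCIZ, which is precisely what produces the constant $\prod_{k=0}^{d-1}k!$.
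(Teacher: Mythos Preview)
The paper does not actually prove this lemma; it simply quotes the formula from the first line of the proof of Theorem~5.1 in Olshanski--Vershik \cite{OV}. Your argument therefore supplies a self-contained derivation where the paper only cites. The route you take---HCIZ followed by the Cauchy--Binet expansion of $\det(e^{x_a y_b})$ and the Weyl bialternant identity $S_\mu(x)=\det(x_a^{\mu_j+d-j})/\Delta(x)$---is the standard one and is correct: with $y_b=i\lambda_b$ the two Vandermondes $\Delta(X)$ and $\Delta(i\vec\lambda)$ produced by the Schur expansion cancel those in the HCIZ denominator directly, so no separate tracking of the factor $i^{d(d-1)/2}$ is even needed (it is already absorbed in $\Delta(i\vec\lambda)$). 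Your remarks on extending off the distinct-spectrum locus by analyticity and on matching the Haar normalization are also to the point.
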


Since we would like to shift the center of mass of $\pcal_{\vec \lambda}$ to the origin, we mainly consider
$ \hat{\mu}_{\vec \Lambda}(X)$ the Fourier transform of the traceless orbit  (see \eqref{CAPL}).



\section{Proof of Proposition \ref{1}: Moment asymptotics}

\subsection{Second moment asymptotics}

We now prove:

\begin{mainlem} \label{2m} \cite{Z1,Z2,SZ} Let $\vec\lambda=(\lambda_1,\dots,\lambda_{d_N})\in\R^{d_N}$, and let
$D_0(\vec\lambda_N)$ denote the trace zero diagonal matrix with entries \eqref{CAPL}. Thus, $p_1(\vec \Lambda) = 0$
\eqref{CAPL}. Then

\begin{equation}\label{orbit-int}  \E Y_N^{A} = \int_{\U(d)} \|\J (U^*D_0(\vec\lambda) U)\|^2 dU =
\frac{p_2(\vec\Lambda)}{d_N+1}, \;,\end{equation}
where  as above,  $d U$ is the normalized  Haar probability measure on $\U(d_N)$.
\end{mainlem}

This Lemma was proved in \cite{Z1,Z2, SZ} using the so-called Itzykson-Zuber-Harish-Chandra formua
for the  Fourier transform of the orbit, and  again using Gaussian integrals. The proof we give here 
generalizes   better to higher moments. We also sketch a proof using the Weingarten formulae.

\begin{proof}

We   use Lemma \ref{OV} to obtain
$$ \E||\J(U^* D(\vec \lambda) U) ||^2  =  (\N-1)! \cdots 0! \sum_{\mu: |\mu| = 2, \ell(\mu) \leq \N}
\frac{\Delta S_{\mu}(0) S_{\mu} (i \vec \lambda)}{(\mu_1 +\N-1)! (\mu_2 + \N-2)! \cdots \mu_{\N}!}. $$
We sum over the Young diagrams with exactly two boxes and $\leq \N$ rows. There are just two of them: one row
of two boxes or two rows of one box each corresponding respectively to the Schur functions $S_{(2, 0)}$, $S_{(1,1)}$. 
Note that $S_{1^k} = e_k$ is the kth elementary  symmetric function and $S_{(k)} = h_k$ is the complete kth degree
symmetric function.

We then  translate $\vec \lambda$ to $\vec \Lambda$ so that $\sum_j \Lambda_j = e_1(\vec \Lambda) = 0$, i.e. we replace $D(\vec \lambda_N)$
by $D_0(\vec \lambda_N)$.


Since the degree $|\mu| = 2$,  then we can only use $\mu = (2), (1 1)$ and
$$S_{(1,1)} = e_2 =  \sum_{i < j} x_i x_j, \;\;\; S_{(2 0)} = e_1^2 - e_2 = \sum_j x_j^2 + \sum_{i < j} x_i x_j. $$
But
$$\Delta e_2 \equiv 0, \;\;\; \Delta (e_1^2 - e_2) = 2 ||\nabla e_1||^2 = 2 d_N. $$

For each monomial $X_i X_j$ we have $\Delta X_i X_j = 2 \delta_{ij}. $ Thus, $\Delta S_{(1,1)}  = 0$ and
$\Delta S_{(2 0)} = 2 \N.$ Since the Schur polynomials are homogeneous of degree 2, we can remove the $i$
under the Schur polynomials to get an overall factor of $-1$, which is cancelled by the $-$ sign from $\Delta$. Thus,
$$\begin{array}{lll}  \E||\J (U^* D_0(\vl) U) ||^2  & = & (2 \N)  (\N-1)! 
\frac{ S_{(2,0)} (i \vec \Lambda)}{( \N + 1)!} = \frac{(2 \N)}{(\N + 1) \N}  S_{(2, 0)}(i \vec \Lambda) 
\\ &&\\ & = & \frac{2}{\N + 1} S_{(2, 0)}(\vec \Lambda_N) = \frac{2}{\N + 1} ( e_1^2 - e_2)(\vec \Lambda).  \end{array}$$
Since
$e_1(\vec \Lambda_N) = 0$ we find that 
$$\begin{array}{lll}  \E||\J(U^* D_0(\vl) U) ||^2  & = & -  \frac{2}{\N + 1}  e_2(\vec \Lambda) = \frac{1}{\N+ 1}
p_2(\vec \Lambda_N). \end{array}$$
Here we use that
$$e_1 = p_1, \; 2 e_2 = e_1 p_1 - p_2. $$
The formula agrees with the one stated in the Lemma \ref{2m}.

\end{proof}

\subsection{Weingarten formulae for the expectation}

As a second proof, we use the Weingarten formula for integrals of polynomials over $U(N)$ \cite{W}. We denote
the eigenvalues of $ D_0(\vec \lambda) $ by $\vec \Lambda$. Then,

$$\begin{array}{lll} || \mbox{diag} (U^* D_0(\vec \lambda)  U)||^2  & = &   \sum_{ j_1,  j_2} \Lambda_{j_1}    \Lambda_{j_2} 
 \sum_{i}  |U_{i j_1}|^2 |U_{i j_2}|^2.
\end{array}$$
The Weingarten formulae for these special polynomials state that asymptotically $\sqrt{d_N} |U_{ij}|^2$ is a
complex Gaussian random variable of mean zero and variance one. Thus, to leading order,
$$\begin{array}{l} \int_{U(\N)} |U_{i_1 j_1}|^2 | U_{i_1 j_2} |^2  dU 
\simeq  \N^{-2} (1 + 
 \delta_{j_1 j_2} ),  \end{array}$$ and 
\begin{equation} \label{EQ} \begin{array}{lll} \sum_{j_1, j_2} \Lambda_{j_1} \Lambda_{j_2} 
\sum_{i_1} \int_{U(\N)} |U_{i_1 j_1}|^2 | U_{i_1 j_2} |^2  dU 
& \simeq & \N^{-1}  ( 2  \sum_j \Lambda_j^2
+  \sum_{j_1 \not= j_2} \Lambda_{j_1} \Lambda_{j_2} ). \end{array}  \end{equation}

Since $$0 = (\sum \Lambda_j)^2 =  \sum \Lambda_j^2 +  \sum_{j \not= k} \Lambda_j \Lambda_k$$
we get 
$$\eqref{EQ} \simeq   \N^{-1}    \sum_j \Lambda_j^2.  $$

\subsection{Proof of Proposition \ref{1}: Variance and fourth moment asymptotics}

We now prove the 4th moment identity in Proposition \ref{1}, which is the main new step in this article.



To calculate the variance of $Y_N^A$ we use the expression in  Lemma \ref{VARFORM} in terms of $\hat{\mu}_{\vec \lambda}$
and then use the formula of Lemma \ref{OV}.

 A Schur polynomial $S_{n_1, \dots, n_d}(x_1, \dots, x_d)$ of degree n  in $d$ variables is parameterized by a
 partition of of the degree $n= n_1 + n_2 + \cdots + n_d$ into $d$ parts. When $n = 4$ and $d \geq 4$ there
are 5 partitions: 
\begin{equation} \label{LIST}
\left\{ \begin{array}{l}  S_{1,1,1,1} (x) = e_4 =  \sum_{1 \leq  i < j < k < \ell} x_i x_j x_k x_{\ell}; \\ \\ S_{2,1,1}(x_1, \dots, x_N) = e_1 e_3\\ \\
S_{2, 2, 0} = e_2^2 - e_1 e_3 
\\ \\
S_{4, 0,0} = e_1^4 - 3 e_1^2 e_2 + 2 e_1 e_3 + e_2^2\\ \\
S_{3, 1, 0} = e_1^2 e_2 - e_2^2 - e_1 e_3. \end{array} \right. \end{equation}
 We note 
that $\Delta e_k(X) = 0$ for all $k$, so $\Delta e_k e_n = 2 \nabla e_k\cdot \nabla e_n $. Also,
$\nabla e_1$ is a constant vector. So
$\Delta^2 e_1 e_3 = \nabla e_1 \cdot \nabla \Delta e_3 = 0$ and $$\Delta^2 e_1^2 e_2 = 
4 \Delta (e_1 \nabla e_1 \cdot \nabla e_2) = 8 \nabla e_1 \cdot \nabla (\nabla e_1 \cdot \nabla e_2) =
8 \rm{Tr} \Hess e_2 = 0. $$ Here, $\Hess$ denotes the Hessian.
We also use that $\Delta (\nabla f \cdot \nabla g) = 2 \Hess(f) \cdot \Hess(g)$ when $\Delta f = \Delta g = 0$.
Also,
$$ \nabla e_1 \cdot \nabla (\nabla e_1 \cdot \nabla e_2)  = (1, 1, \dots, 1) \cdot \sum_{j, k} \frac{\partial^2 e_2}{\partial
x_j \partial x_k} \frac{\partial}{\partial x_k} = \mbox{Tr Hess} (e_2) = 0. $$

Then,
$$\Delta^2  e_2^2 = 2 \Delta (\nabla e_2 \cdot \nabla e_2) = 4 ||\Hess (e_2)||^2 =  4 d_N(d_N -1). $$
Further, $\Delta e_1^2 = 2 \nabla e_1 \cdot \nabla e_1 = 2 d_N$, so that
$$\Delta^2 e_1^4 = \Delta  (2 (\Delta e_1^2) e_1^2 + 2 \nabla e_1^2 \cdot \nabla e_1^2) = \Delta( 4 \N e_1^2 + 2 e_1^2 \N) = 12 \N^2. $$

We recall Newton's identities,
$$\left\{ \begin{array}{l} e_1 = p_1 \\ \\
2 e_2 = e_1 p_1 - p_2 \\ \\
3 e_3 = e_2 p_1 - e_1 p_2 + p_3 \\ \\
4 e_4 = e_3 p_1 - e_2 p_2 + e_1 p_3 - p_4. \end{array} \right. $$

We note that $\Delta e_k \equiv 0$ for all $k$, so at $X = 0$,
\begin{equation} \label{LIST2} \left\{ \begin{array}{l} 
\Delta^2 S_{1,1,1,1}  = \Delta e_4 \equiv 0; \;\;\; \\ \\ \Delta S_{2,1,1} = 
\Delta^2 e_1 e_3 = 2 \Delta (\nabla e_1 \cdot \nabla e_3) = \nabla e_1 \cdot \nabla \Delta e_3 = 0;\\ \\
\Delta^2 S_{2,2,0} = \Delta^2 e_2^2 = 2 \Delta (\nabla e_2 \cdot \nabla e_2) = 4 || \mbox{Hess} (e_2)||^2 
= 4 d_N ( d_N -1)  \\ \\
\Delta S_{4, 0, 0} = \Delta^2 e_1^4 - (3) 8 \nabla e_1 \cdot \nabla (\nabla e_1 \cdot \nabla e_2)  + 4 ||\Hess (e_2)||^2
= 12 \N^2  + 4  d_N ( d_N -1)\\ \\
\Delta S_{3, 1, 0} =  8 \nabla e_1 \cdot \nabla (\nabla e_1 \cdot \nabla e_2) - ||\Hess (e_2)||^2 
= - 4  ||\Hess(e_2)||^2 = - 4 d_N ( d_N -1).
\end{array} \right.  \end{equation}

By  routine calculations and Lemma \ref{OV} we have,

\begin{equation}  \begin{array}{lll}
\Delta^2 \hat{\mu}_{\vec \Lambda}(0) & = & (\N-1)! \cdots 0! \sum_{\mu: |\mu| = 4}
\frac{\Delta^2 S_{\mu}(0) S_{\mu} (i \vec \Lambda)}{(\mu_1 + \N-1)! (\mu_2 + \N-2)! \cdots \mu_{\N}!}\\ &&\\
& = & (\N-1)!  (\N-2)! \frac{\Delta^2 S_{2,2,0} (0) S_{2,2, 0}(i \vec \Lambda)}{(\N + 1)! (\N )!  }
\\ &&\\
& + & (\N-1)!   \frac{\Delta^2 S_{4, 0, 0} (0) S_{4, 0, 0}(i \vec \Lambda)}{(\N + 3)! }\\
& + & (\N-1)!  (\N-2)!  \frac{\Delta^2 S_{3,1, 0} (0) S_{3, 1, 0}(i \vec \Lambda)}{(\N + 2)! (\N - 1 )!} \\ &&\\
& = &  \frac{\Delta^2 S_{2,2,0} (0) S_{2,2, 0}(i  \vec \Lambda)}{(\N + 1) \N^2 (\N-1) }
\\ &&\\
& + &    \frac{\Delta^2 S_{4, 0, 0} (0) S_{4, 0, 0}(i \vec \Lambda)}{(\N + 3) (\N+2) (\N+1)\N} +   \frac{\Delta^2 S_{3,1, 0} (0) S_{3, 1, 0}(i \vec \Lambda)}{(\N + 2)(\N+1) \N  (\N - 2)}
. \end{array} \end{equation}
By \eqref{LIST2}, we then have


\begin{equation}  \begin{array}{l}
\Delta^2 \hat{\mu}_{\vec  \Lambda}(0) 
=  \frac{4 d_N(d_N -1)  S_{2,2, 0}(i \vec \Lambda)}{(\N + 1) \N^2 (\N-1) }
+ \frac{ (12 \N^2 + 2 d_N(d_N -1)  )S_{4, 0, 0}(i \vec \Lambda)}{(\N + 3) (\N+2) (\N+1)\N} +   \frac{- 4 d_N(d_N -1)  S_{3, 1, 0}(i \vec \Lambda)}{(\N + 2)(\N+1) \N  (\N - 2)}
. \end{array} \end{equation}

Recalling \eqref{LIST}
and that $e_1(\vec \Lambda) = 0$, we get

\begin{equation}  \begin{array}{l}
\Delta^2 \hat{\mu}_{\vec \Lambda}(0) 
=  \frac{4 d_N (d_N -1) e_2^2 (i  \vec \Lambda)}{(\N + 1) \N^2 (\N-1) }
+    \frac{ (12 \N^2 + 4  d_N (d_N -1) ) e_2^2 (i \vec \Lambda)}{(\N + 3) (\N+2) (\N+1)\N} + \frac{- 4  d_N (d_N -1) e_2^2 (i \vec \Lambda)}{(\N + 2)(\N+1) \N  (\N - 2)}
. \end{array} \end{equation}

Further recalling that  $2 e_2 = e_1 p_1 - p_2$ we finally get
\begin{equation}  \begin{array}{l}
\Delta^2 \hat{\mu}_{\vec \Lambda}(0) =  \frac{ d_N(d_N -1)  p_2^2 (i \vec \Lambda)}{(\N + 1) \N^2 (\N-1) }
+    \frac{ (3 \N^2 +  d_N(d_N -1)  ) p_2^2 (i \vec \Lambda)}{(\N + 3) (\N+2) (\N+1)\N} 
+\frac{-  d_N(d_N -1)  p_2^2 (i \vec \Lambda)}{(\N + 2)(\N+1) \N  (\N - 2)}
. \end{array} \end{equation}
Since the polynomials are homogeneous of degree $4$, the factor of $i$ inside the polynomials may be removed,
and  we get  

$$\begin{array}{lll}  \E||\J (U^* D_0(\vl) U) ||^4  & = &
  \frac{ d_N(d_N -1)  p_2^2 ( \vec \Lambda)}{(\N + 1) \N^2 (\N-1) }
+    \frac{ (3 \N^2 +  d_N(d_N -1)  ) p_2^2 (\vec \Lambda)}{(\N + 3) (\N+2) (\N+1)\N} 
+\frac{-  d_N(d_N -1)  p_2^2 ( \vec \Lambda)}{(\N + 2)(\N+1) \N  (\N - 2)}
. \end{array} $$

As $N \to \infty$ the leading asymptotics of the outer terms cancel and the middle term is asymptotic to 
$ \frac{4}{d_N^2}  p_2^2(  \vec \Lambda_N)$.  We note that $\frac{p_2( \vec \Lambda_N)}{d_N}$ is bounded.
If the the empirical measure of eigenvalues tends to a limit measure, then  $\frac{p_2( \vec \Lambda_N)}{d_N}$
tends to its second moment.

Together with Lemma \ref{orbit-int}, this  completes the proof of Proposition \ref{1}. Corollary \ref{VARCOR}
follows by subtracting the square of the expectation.

\section{Completion of proof of Theorem \ref{MAINTHEO} }

By the assumption of Definition \eqref{TRACE}
\begin{equation} \omega(A)   =
\frac{1}{d_N}{\rm Tr}\;T^A_N +o(1)\;,\end{equation}  
By Lemma \ref{VARFORM}-Corollary \ref{VARCOR},  the variances of the independent random
variables $\frac{1}{d_N}Y^A_N$ are bounded. Hence, as explained in the introduction (see also \cite{Z1,SZ}),  (\ref{EP*}) follows from
 Lemma \ref{VARFORM}-Corollary \ref{VARCOR} and the Kolmogorov strong law of large numbers, which gives
\begin{equation}\label{EP*ave}\lim_{N\rightarrow \infty} \frac{1}{N}
\sum_{n=1}^N \left(\frac{1}{d_n}Y^A_n\right)=0 \;\;\; \mbox{almost surely}\;.\end{equation}
By (\ref{Y}),
$$\sup_{ \ocal \ncal \bcal_N } |X^{A}_N - \frac{1}{d_N} Y^{A}_N| =o(1) .$$ Hence also  
\begin{equation}\label{EP*aveb}\lim_{N\rightarrow \infty} \frac{1}{N}
\sum_{n=1}^N \left(\frac{1}{d_n}X^A_n\right)=0 \;\;\; \mbox{almost surely}\;.\end{equation}

If the dimensions $d_N$ grow fast enough so that $\frac{1}{d_N}$ is summable, then we obtain a stronger 
form from the fact that $\sum_{n = 1}^{\infty} \E \frac{1}{d_n} Y_n^A $ is finite hence the general term must
tend to zero almost everywhere. It follows again that $\E \frac{1}{d_n} X_n^A  \to 0$ almost everywhere.

\section{Applications}

\subsection{\label{TORUS} Fat tori}

Theorem \ref{MAINTHEO} applies to eigenspaces of the Laplacian on the flat torus $\R^d/\Z^d$ (or other
rational lattices)  of dimension $\geq 5$ and
for many eigenspaces in dimensions $d = 2, 3, 4$.

\begin{prop} \label{RATIONALTOR} Random orthornomal bases of $\Delta$-eigenspaces  of the flat torus $\R^d/\Z^d$ are quantum ergodic 
for $d \geq 5$. Also for $d = 2, 3, 4$ for special eigenspaces (specified below). \end{prop}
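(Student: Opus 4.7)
The plan is to verify the two hypotheses of Theorem \ref{MAINTHEO} for the sequence of $\Delta$-eigenspaces $\hcal_N$ on the flat torus $\R^d/\Z^d$: namely, that the multiplicities $d_N$ tend to infinity, and that the local Weyl law $\frac{1}{d_N}\mathrm{Tr}\,\Pi_N A \Pi_N \to \omega(A)$ holds for every $A \in \Psi^0(\R^d/\Z^d)$. Once both are established, the proposition follows immediately by applying Theorem \ref{MAINTHEO}.

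First I would index distinct eigenvalues by $\mu_N \in \Z_{\geq 0}$, so that $\hcal_N = \mathrm{span}\{e^{2\pi i\langle k,x\rangle} : |k|^2 = \mu_N\}$ and $d_N = r_d(\mu_N)$ is the number of representations of $\mu_N$ as a sum of $d$ squares. For $d \geq 5$, the classical theorem of Hardy--Littlewood (via the circle method, or equivalently via the theory of theta series) gives $r_d(\mu) = c_d \,\mathfrak{S}_d(\mu)\,\mu^{(d-2)/2} + O(\mu^{d/4})$, where the singular series $\mathfrak{S}_d(\mu)$ is bounded below on the set of representable integers. Thus $d_N \to \infty$ at the polynomial rate $\mu_N^{(d-2)/2}$ along the sequence of representable $\mu_N$, confirming the first hypothesis.

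Next I would verify the local Weyl asymptotics. For $A \in \Psi^0$ with principal symbol $\sigma_A(x,\xi)$ homogeneous of degree $0$, a direct Fourier-series computation on $\R^d/\Z^d$ gives
\begin{equation}
\langle A e_k, e_k\rangle = \int_{\R^d/\Z^d} \sigma_A\!\left(x, \tfrac{k}{|k|}\right) dx + O(|k|^{-1}),
\end{equation}
so that
\begin{equation}
\frac{1}{d_N}\mathrm{Tr}\,\Pi_N A \Pi_N = \int_{\R^d/\Z^d} \left(\frac{1}{r_d(\mu_N)}\sum_{|k|^2=\mu_N} \sigma_A\!\left(x,\tfrac{k}{\sqrt{\mu_N}}\right)\right) dx + o(1).
\end{equation}
The convergence of this to $\omega(A) = \int_{S^*M}\sigma_A \, d\mu_L$ reduces precisely to the statement that the normalized counting measures on $\{k/\sqrt{\mu_N} : |k|^2 = \mu_N\} \subset S^{d-1}$ become equidistributed with respect to the uniform measure on $S^{d-1}$. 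For $d \geq 5$ this equidistribution follows from the classical work of Pommerenke and Malyshev using bounds on Fourier coefficients of the associated theta series (which in turn rest on the Ramanujan--Deligne bound for cusp-form coefficients), and holds for every sufficiently large representable $\mu_N$. This completes the verification in the main case.

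For $d = 2,3,4$ the hypothesis must be imposed along subsequences. For $d = 4$ the multiplicity formula of Jacobi and equidistribution hold on $\mu_N$ with a prescribed $2$-adic condition; for $d = 3$ the equidistribution is Duke's theorem, valid for $\mu_N$ satisfying an appropriate squarefree or congruence condition (with $r_3(\mu_N) \to \infty$ outside the sparse set of lacunary values); and for $d = 2$ only very thin sequences work, since generically $r_2(\mu)$ is bounded. I would restrict the statement to the subsequences on which equidistribution is known, producing the ``specified below'' eigenspaces alluded to in the proposition; the main estimate \eqref{EP*ave} of Theorem \ref{MAINTHEO} then applies verbatim. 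The principal obstacle is precisely this arithmetic input: establishing equidistribution of lattice points on spheres is nontrivial and is what makes the low-dimensional cases subtle, whereas for $d \geq 5$ the required statements are classical, making that case an immediate corollary of Theorem \ref{MAINTHEO}.
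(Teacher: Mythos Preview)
Your proposal is correct and follows essentially the same route as the paper (stated there as Lemma \ref{FLAT}): reduce the local Weyl law $\frac{1}{d_N}\mathrm{Tr}\,\Pi_N A\Pi_N \to \omega(A)$ to the equidistribution of lattice points on spheres of radius $\sqrt{\mu_N}$, invoke Pommerenke for $d\geq 5$, and restrict to the arithmetically admissible subsequences in low dimensions (the paper cites Pommerenke for $d=4$, Duke--Schulze-Pillot for $d=3$, and Erd\H{o}s--Hall for $d=2$). One small correction on attribution: for $d\geq 5$ the equidistribution result (Pommerenke, 1959) is classical and does not rely on the Ramanujan--Deligne bound.
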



The only condition on the eigenspaces for Theorem \ref{MAINTHEO}   is that \eqref{TRACE} holds, and we now
recall the known results on this problem. 
 Given $A \in \Psi^0$, we denote the eigenspaces on a flat torus, enumerated in order of the eigenvalue by $\hcal_N$
and by $\Pi_N$ the orthogonal projection to $\hcal_N$  \footnote{Thanks to Z. Rudnick for explanations and
references}.

\begin{lem} \label{FLAT} The condition \eqref{TRACE} is valid in dimensions $\geq 5$ on $\R^d/\Z^d$. That is,
$$\frac{1}{d_N} Tr A \Pi_N \sim   \int_{S^* T^m} a(x, \omega) dx \wedge d \omega. $$
It follows that $\frac{1}{d_N} Y_N^A \to 0$ almost surely. 

In dimensions 2, resp. 3,  resp. 4 there are restrictions on the sequence of eigenvalues given in \cite{EH}, resp. \cite{DSP}, resp. 
\cite{P}. For eigenvalues in the allowed sequences, \eqref{TRACE} is valid.
\end{lem}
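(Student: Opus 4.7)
The plan is to reduce the local Weyl asymptotics \eqref{TRACE} to equidistribution of lattice points on spheres, and then invoke the known number-theoretic equidistribution theorems in each dimension. On $\R^d/\Z^d$, the $\Delta$-eigenspaces $\hcal_N$ are spanned by the plane waves $e_k(x) = e^{2\pi i \langle k, x\rangle}$ with $k$ ranging over the lattice points on the sphere $\Lambda_N = \{k \in \Z^d : |k|^2 = \mu_N^2\}$, and $d_N = \# \Lambda_N$ is the classical sum-of-squares function $r_d(\mu_N^2)$.

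First, I would compute $\langle A e_k, e_k\rangle$ for $A \in \Psi^0(M)$. Writing $A$ as a pseudodifferential operator with full symbol $a(x,\xi)$, a short stationary-phase / direct oscillatory integral computation gives
\begin{equation}
\langle A e_k, e_k\rangle \;=\; \int_{T^d} a(x,2\pi k)\, dx \;=\; \int_{T^d}\sigma_A\!\left(x, \tfrac{k}{|k|}\right) dx + O(|k|^{-1}),
\end{equation}
where $\sigma_A$ is the principal symbol restricted to $S^*M$ (the $O(|k|^{-1})$ uses that $a - \sigma_A$ is of order $-1$). Summing over $\Lambda_N$ and dividing by $d_N$ gives
\begin{equation}
\frac{1}{d_N}\,\mathrm{Tr}\,\Pi_N A \Pi_N \;=\; \int_{T^d}\!\!\int_{S^{d-1}} \sigma_A(x,\omega)\, d\nu_N(\omega)\, dx \;+\; O(\mu_N^{-1}),
\end{equation}
where $\nu_N$ is the normalized atomic measure on $\Lambda_N/\mu_N \subset S^{d-1}$. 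Thus \eqref{TRACE} is equivalent to weak-$*$ convergence $\nu_N \rightharpoonup d\sigma$, the uniform measure on $S^{d-1}$, along the relevant sequence of $\mu_N$.

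Second, I would quote the equidistribution theorems. For $d \geq 5$, the Hardy--Littlewood circle method plus positivity of the singular series yields $\nu_N \rightharpoonup d\sigma$ for \emph{every} sequence $\mu_N^2 \to \infty$ in the image of $|k|^2$. For $d=4$, equidistribution holds along sequences avoiding the degenerate subsequence identified by Pommerenke (\cite{P}), essentially eigenvalues with an anomalously small odd part. For $d=3$, Duke's theorem \cite{DSP} (via Iwaniec's bound on Fourier coefficients of half-integral weight forms) gives $\nu_N \rightharpoonup d\sigma$ for $\mu_N^2 \not\equiv 0,4,7 \pmod 8$ (the classical Legendre--Gauss congruence conditions), and one can impose squarefreeness or the relevant local condition as in \cite{DSP}. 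For $d=2$, Erd\H{o}s--Hall \cite{EH} (refined by Fomenko--Golubeva and others) give equidistribution along sequences where the Gaussian-integer factorization of $\mu_N^2$ has sufficiently many prime factors.

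Third, once $\nu_N \rightharpoonup d\sigma$ on $S^{d-1}$ along the allowed sequence, Fubini and the integrand's continuity give $\frac{1}{d_N}\mathrm{Tr}\,\Pi_N A \Pi_N \to \int_{T^d \times S^{d-1}}\sigma_A\, dx\, d\sigma = \omega(A)$ for all $A\in \Psi^0(M)$, so the hypothesis of Theorem \ref{MAINTHEO} is verified and almost-sure quantum ergodicity of a random orthonormal basis of $\bigoplus_N \hcal_N$ follows. The only real obstacle is the number-theoretic input in dimensions $2,3,4$, which is the reason for the restricted sequences; this input is now standard and we merely cite it. The $O(\mu_N^{-1})$ remainder from the pseudodifferential calculation is harmless because $d_N \to \infty$ forces $\mu_N \to \infty$.
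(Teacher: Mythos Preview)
Your approach is essentially the same as the paper's: both compute $\langle A e_k, e_k\rangle$ as the torus average of the symbol, reduce $\frac{1}{d_N}\mathrm{Tr}\,\Pi_N A\Pi_N$ to the empirical measure of the rescaled lattice points $k/|k|$ on $S^{d-1}$, and then cite the number-theoretic equidistribution results (Pommerenke for $d\geq 5$ and $d=4$, Duke--Schulze-Pillot for $d=3$, Erd\H{o}s--Hall for $d=2$). Your write-up is in fact slightly more careful than the paper's about the $O(\mu_N^{-1})$ lower-order symbol error.

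One small point you omit: the lemma asserts the \emph{termwise} almost-sure convergence $\frac{1}{d_N}Y_N^A\to 0$, not merely the Ces\`aro statement that Theorem~\ref{MAINTHEO} yields. The paper obtains this by observing that in dimension $d\geq 5$ one has $d_N\sim \mu_N^{d-2}$, so $\sum_N 1/d_N<\infty$ and the Borel--Cantelli argument (rather than just the Kolmogorov SLLN) applies. You should add that remark to fully match the stated conclusion.
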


\begin{proof} We use the basis $e^N_k = e^{i \langle k, x \rangle}$ with $|k| = \mu_N$. Then
$$\langle A e^N_k, e^N_k \rangle = \int_{\R^n/Z^n} \sigma_A(x, k) dx. $$
Hence
$$\frac{1}{d_N} Tr \Pi_N A = \sum_{k: |k| = \mu_N}  \int_{\R^n/Z^n} \sigma_A (x, k) dx. $$
 In dimensions
$n \geq 5$, $d_N \sim \mu_N^{n - 2}$. It is proved that  lattice points of fixed norm on a sphere of radius $\sqrt{n}$  become uniformly distributed as $n \to \infty$ \cite{P}.  It follows that
$$\frac{1}{d_N}  \sum_{k: |k| = \mu_N}  \int_{\R^n/Z^n} \sigma_A (x, k) dx
 \to    \int_{S^* T^m} a(x, \omega) dx \wedge d \omega. $$
As in the last step of the proof of Theorem \ref{MAINTHEO},
 $\E \frac{1}{d_N} X_N^A$ is summable when $n \geq 5$.

The Liouville limit formula  is true in dimension 4 when the number of lattice points
grows linearly in $n$. The condition on n is given in \cite{P}.   In dimension 3, the equidistribution result is proved in \cite{DSP} with similar conditions
on the sequence of integers $n$.

Dimension $2$ is more complicated.  In dimension 2,  the eigenvalues of integers $n$ for
which there exist lattice points $(a,b)$ on the  circle $a^2 +b^2=n$. It is necessary that all prime factors of $n$ are congruent to 1 modulo 4. In \cite{EH} it is shown that for almost all such n, the lattice points on the circle become uniformly distributed
as $n \to \infty$.
\end{proof}

\begin{rem} In the case of a generic lattice $L \subset \R^d$, the multiplicity of eigenvalues of $\Delta$
on  $\R^d/ L$  is two. The analogue of the eigenspaces above are spectral subspaces for $\sqrt{\Delta}$ of
shriking width $w$. Thus, one considers the exponentials $e^{i \langle \ell, x \rangle}$ for $\ell \in L$
with $|\ell| \in [\lambda - C w, \lambda +w]$.  It follows from the lattice point results of  \cite{G}  that in dimensions $d \geq 5$,
the number of eigenvalues of an irrational flat torus in $[\lambda, \lambda + O(\lambda^{-1}]$ is of
order $\lambda^{d- 2}$. The question whether the  trace asymptotics  \eqref{TRACE}  hold for the
span of the corresponding eigenfunctions does not appear to have been studied.
\end{rem}

\subsection{\label{QM} Quasi-modes}

Theorem \ref{MAINTHEO} is not restricted to eigenspaces of the Laplacian and is equally valid for spaces of
quasi-modes.  We refer to \cite{CV, Po} for background on quasi-modes. Following \cite{Po},  we define  a $C^{\infty}$ quasimode of infinite order for $\hbar^2 \Delta$ with index set
${\mathcal M}_h$ to be  a family $${\mathcal Q} = \{(\psi_m(\cdot, \hbar), \mu_m(\hbar)):
m \in {\mathcal M}_{\hbar} \}$$
of  approximate eigenfunctions satisfying
\begin{equation}\left\{ \begin{array}{l} (i)  ||( \hbar^2 \Delta - \mu_m(\hbar)) \psi_m(\cdot, \hbar)||_{H^s} = O_M (\hbar^{M}),\;\;\;
(\forall M \in \Z^+), \\ \\ (ii) |\langle \psi_m, \psi_n \rangle - \delta_{mn} | =
 O_M (\hbar^{M}),\;\;\;
(\forall M \in \Z^+). \end{array} \right. \end{equation}
It follows by the spectral theorem that
for any $M \in \Z^+$, there exists at least one eigenvalue of $\hbar^2 \Delta$ in the interval
 $$I_{m M}^{\hbar} = [\mu_m(\hbar) - \hbar^M,\mu_m(\hbar) + \hbar^M],$$
and
\begin{equation} \label{QMEST} ||E_{I_{m M}^{\hbar} } \psi_k - \psi_k||_{H^s} =  O_M (\hbar^{M}). \end{equation}
Here,  $E_I$ denotes the spectral projection for $\hbar^2 \Delta$ corresponding
to the interval $I$. We 
denote the quasi-classical eigenvalue spectrum of $\hbar \sqrt{\Delta}$by
$$QSp_{\hbar} = \{\mu_m(\hbar): m \in {\mathcal M}_{\hbar} \}.$$

Since quasi-eigenvalues $\mu_m( \hbar)$ are only defined up to errors of
order $h^{\infty}$, there is  a notion of `multiple quasi-eigenvalue' defined as
follows: we say $\mu_m(\hbar) \sim \mu_n (\hbar)$ if $\mu_m - \mu_n = O(\hbar^{\infty})$
and define the multiplicity of $\mu_m(\hbar)$ by
$$mult (\mu_m(\hbar)) = \# \{n: \mu_m(\hbar) \sim \mu_n (\hbar) \} =
\dim Span \{\psi_n(\cdot, \hbar): (\hbar^2 \Delta - \mu_m(\hbar)) \psi_n = O(\hbar^{\infty}) \}. $$
We then
introduce slightly larger intervals ${\mathcal I}_{m, \hbar}$ (if need be) so that
$$QSp(\hbar) \subset \bigcup_{m \in {\mathcal M'}} {\mathcal I}_{m, \hbar},\;\;\; {\mathcal I}_{m, \hbar} \cap {\mathcal I}_{n, \hbar} = \emptyset \;\; (m \not= n).$$
Here, ${\mathcal M}'$ consists of equivalence classes of indices (corresponding
to equivalence classes of quasimodes).
We denote by ${\mathcal H}_m^{\hbar}$ the span of the quasimodes
$\{\psi_m(\cdot, \hbar): \mu_m (\hbar) \in {\mathcal
I}_m^{\hbar}\}$. Then
$$|| E_{{\mathcal I}_m^{\hbar}} v - v || = O(\hbar^{\infty}), \;\; \mbox{if}\;\; v \in {\mathcal H}_m^{\hbar}. $$

Theorem \ref{MAINTHEO} applies to quasi-mode spaces ${\mathcal H}_m^{\hbar}$ as long as their dimensions tend to infinity
and as long as there exists a unique limit state for $\frac{1}{\dim {\mathcal H}_m^{\hbar}} Tr A |_{{\mathcal H}_m^{\hbar}}$.
One might expect true modes (eigenfunctions) with eigenvalues in the intervals $I_{m M}^{\hbar}$ to be close to
linear combinations of the quasi-modes with quasi-eigenvalues in that interval.  The question raised by Theorem 
\ref{MAINTHEO} is whether they behave like random linear combinations or not. If they do, Theorem \ref{MAINTHEO}
gives their quantum limits. 

In particular, this bears on the question whether $\Delta$-eigenfunctions of 
compact  Riemannian
manifolds $(M, g)$ with KAM geodesic flow might be quantum ergodic. It seems unlikely that they are, but we are not
aware of a proof that they are not. 
For such KAM $(M, g)$,   a large family of quasi-modes is constructed in \cite{CV,Po} which localize
on the invariant tori of the KAM  Cantor set of tori.
Without reviewing the results in detail, the `large' family has positive spectral density, i.e. the number of quasi-eigenvalues
$\leq \mu$ grows like a positive  constant times $\mu^n$ where $n = \dim M$.

To our knowledge, the multiplicities and  trace  asymptotics for KAM quasi-modes  have not been studied at this time.  As
in the discussion of flat tori, one would need to determine the equidistribution law of the tori in the invariant Cantor set 
corresponding to eigenvalues (or pseudo-eigenvalues)   of $\sqrt{\Delta}$ in very short  intervals $I_{\lambda} = [\lambda - w,
\lambda + w]$.  
The orthonormal basis of eigenfunctions is not simple to relate to the near orthonormal basis of quasi-modes in this case,
but we might expect that a positive density of the eigenfunctions are mainly given as linear combinations of KAM 
quasi-modes with quasi-eigenvalues very close to the true eigenvalues. Whether or not they are quantum ergodic 
would reflect the extent to which they are sufficiently random combinations of quasi-modes and the extent to which
the collection of quasi-modes in $I^{\hbar}_{M, m}$ is Liouville distributed.  


\end{document}